\numberwithin{equation}{section}\theoremstyle{plain}
\newtheorem{theorem}{Theorem}[section]
\newtheorem{lemma}[theorem]{Lemma}
\newtheorem{definition}[theorem]{Definition}
\newtheorem{remark}[theorem]{Remark}
\address{\begin{center}{\small Department of Mathematics and Computer Sciences, Faculty of Sciences-Mekn\`{e}s,\\
Equipe d'Analyse Harmonique et Probabilit\'{e}s, University Moulay Isma\"{\i}l,\\
BP 11201 Zitoune, Meknes, Morocco}
\end{center}}
\begin{document}

\title[Local uncertainty principles for the two-sided Gabor Quaternion Fourier transform ]
{Local uncertainty principles  for the two-sided Gabor Quaternion Fourier transform }

\author[  M. El kassimi and S. Fahlaoui ]{ M. El kassimi  \quad and \quad S. Fahlaoui }

\address{Sa\"{\i}d Fahlaoui}  \email{s.fahlaoui@fs.umi.ac.ma}

\address{Mohammed El kassimi} \email{m.elkassimi@edu.umi.ac.ma}

\thanks{Mohammed El kassimi: \texttt{m.elkassimi@edu.umi.ac.ma}}
\thanks{Sa\"{\i}d Fahlaoui: \texttt{s.fahlaoui@fs.umi.ac.ma}}

\maketitle
\begin{abstract}\leavevmode\par
Regarding the important applications of Gabor transform in time-frequency analysis and signal analysis, Actually, in this paper,  we consider the Gabor quaternion Fourier transform (GQFT), and we prove a version  of Benedicks-type  uncertainty principle for GQFT and some  local concentration uncertainty principles.

\end{abstract} {\it keywords:} Quaternion algebra, Quaternion Fourier transform, Gabor Fourier transform, concentration theorem,  Benedicks theorem.\\
\section{Introduction}
The two sided quaternion Fourier transform has become increasingly important tool in image processing. Thus, the two-sided quaternion Fourier transform gives us a simple representation of signals with several components that can be controlled simultaneously. The quaternion Fourier transform QFT was first introduced by Ell \cite{Ell}. In \cite{Hitzar}, Hitzer has proved  important properties  of QFT. In \cite{Sangwine}, the authors used the QFT in color image analysis. \\
In 1940 \cite{Gabor1}, Dennis Gabor has given a powerful tool in signal analysis. So, he introduced a windowed function for studying a time-frequency signals. This idea becomes a useful tool to obtain information about a signal in limited region. Motivated by the applications of QFT and the Gabor Fourier transform (GFT) cited above, in \cite{Elkassimi} the others have given an extension for GFT to the quaternion case. They have defined the two-sided Gabor quaternion Fourier transform (GQFT). Some useful results of GQFT are derived, like Plancherel and reconstruction formulas. Also, in \cite{Elkassimi} the authors have demonstrated a version of the Heisenberg uncertainty principle and Logarithmic inequality for the GQFT.\\
The uncertainty principle state that, we cannot give simultaneously the position and momentum  of a particle. Therefore, if try to limit the region of one we lose control of the other. The uncertainty principles have many applications in quantum physics and signal analysis. There are many versions of uncertainty principles, for example, Donoho-stark \cite{Donoho}, Benedicks theorem \cite{Benedicks}.
The aim of this paper is to demonstrate some uncertainty for the GQFT.
The paper is organized as follows. In the second section, we remind some harmonic analysis properties for the two-sided quaternion Gabor Fourier transform proved in \cite{Elkassimi}. In the third section, we give a version of the  Benedicks-type theorem and  some local uncertainty principles.

\subsection{Definition and properties of quaternion $\mathbb{H}$}: \\
Considering the the classical notations, the quaternion algebra $\mathbb{H}$ is the set of all elements $q$ such that
$$q=q_1+iq_2+jq_3+kq_4\in \mathbb{H},~~ q_1,q_2,q_3,q_4\in\mathbb{R},$$
 with i, j and k  are  three imaginary units obey the Hamilton's multiplication rules, \begin{equation}\label{equ1} ij=-ji=k, ~~ jk=-kj=i, ~~ki=-ik=j \end{equation}
$$i^2=j^2=k^2=ijk=-1$$
Due  to \ref{equ1} $\mathbb{H}$ is non-commutative algebra.

The conjugate of quaternion $q$ is obtained by changing the sign of the pure part, i.e.
$$\overline{q}=q_1-iq_2-jq_3-kq_4$$
The quaternion conjugation is a linear anti-involution $$\overline{\overline{p}}=p,~~~~ \overline{p+q}=\overline{p}+\overline{q},~~ \overline{pq}=\overline{q} \overline{p},~ \forall p,q\in \mathbb{H}$$
The modulus $|q|$ of a quaternion q is giving by $$|q|=\sqrt{q\overline{q}}=\sqrt{q_1^2+q_2^2+q_3^2+q_4^2},~~ |pq|=|p||q|. $$
the modulus of  $\mathbb{H}$  has the following properties,
$$|pq|=|p||q|, |q|=|\overline{q}|, p, q \in \mathbb{H}$$
In particular, when $q=q_1$ is a real number, the module $|q|$ reduces to the ordinary Euclidean modulus, i.e. $|q|=\sqrt{q_1q_1}$. A quaternion valued function $f:\mathbb{R}^{2}\rightarrow\mathbb{H}$ can also be written  as
 $$f (x, y): =f_1 (x, y) +if_ {2} (x, y) +jf_3 (x, y)+kf_4 (x, y), $$
 where $(x,y)\in\mathbb{R}\times\mathbb{R}$.\\

             The  inner product of two quaternion valued functions $f, g$ defined on $\mathbb{R}^2$ is defined  as follows $$<f,g>_{L^{2}(\mathbb{R}^2,\mathbb{H})}=\int_{\mathbb{R}^2}f(x)\overline{g(x)}dx$$
When $f=g$, we obtain the associated norm giving by $$\|f\|^{2}_{2}=<f,f>_{2}=\int_{\mathbb{R}^2}|f(x)|^2dx$$
we define the space $L^{2}(\mathbb{R}^2,\mathbb{H})$ of all squared integrable functions by
$$L^2(\mathbb{R}^2,\mathbb{H})=\{ f:\mathbb{R}^2\rightarrow\mathbb{H}| \|f\|_{2}<\infty\}$$
For quaternion measurable function $f$ and $p$ nonzero integer, we define
$$\|f\|_{L^p(\mathbb{R}^2,\mathbb{H})}=\int_{\mathbb{R}^2}|f(x)|^pdx$$

  $$L^p(\mathbb{R}^2,\mathbb{H})=\{ f:\mathbb{R}^2\rightarrow\mathbb{H}| \|f\|_{L^p(\mathbb{R}^2,\mathbb{H})}<\infty\}$$
Convolution of two functions two measurable functions $f$ and $g$ over $\mathbb{R}^2$ is given by,
$$f\ast g(y)=\int_{\mathbb{R}^2}f(x)g(x-y)dx$$

\section{The two-sided  Gabor Quaternionic  Fourier transform (GQFT)}

In this section, we start by defining the two-sided Gabor quaternion Fourier transform GQFT, and we reminder some properties, which will be used to prove the principle results.\\

\begin{definition}[Quaternion Fourier transform]
  The two-sided quaternion Fourier transform (QFT)of a quaternion function
  $f\in L^{1}(\mathbb{R}^{2},\mathbb{H})$ is the function  $\mathcal{F}_{q}(f): \mathbb{R}^{2}\rightarrow \mathbb{H}$  defined by:\\ for $\omega=(\omega_1,\omega_2)\in \mathbb{R}\times\mathbb{R}$
  \begin{equation}\label{QFT}
  \mathcal{F}_{q}(f)(w)=\int_{\mathbb{R}^{2}}e^{-2\pi ix_1.\omega_1}f(x)e^{-2\pi jx_2.\omega_2}dx
  \end{equation}
  where $dx=dx_1dx_2$
\end{definition}
According to \cite{Elkassimi} the GQFT is given by

\begin{definition}\label{def-GQFT}
  We define the GQFT of $f\in L^{2}(\mathbb{R}^2,\mathbb{H})$ with respect to non-zero quaternion window function
  $\varphi\in L^{2}(\mathbb{R}^2,\mathbb{H})$ as,
  \begin{equation} \label{eqGQFT}
 \mathcal{G}_{\varphi}f(\omega,b)=\int_{\mathbb{R}^2}e^{-2\pi ix_1\omega_1}f(x)\overline{\varphi(x-b)}e^{-2\pi jx_2.\omega_2}dx
  \end{equation}
\end{definition}
We note by  $$\|\mathcal{G}_{\varphi}\{f\}\|^2_{L^2(\mathbb{R}^2\times\mathbb{R}^2,\mathbb{H})}=\int_{\mathbb{R}^2}\int_{\mathbb{R}^2}\mathcal{G}_{\varphi}f(\omega,b)d\omega db$$
Some important properties of GQFT have been derived \cite{Elkassimi}, which we will use to prove some uncertainty principle and some inequalities.
\begin{theorem}[Inversion formula]
  Let $\varphi$ be a quaternion window function. Then for every function $f\in L^{2}(\mathbb{R}^2,\mathbb{H})$ can be reconstructed  by :
  $$f(x)=\frac{1}{\|\varphi\|^2_{2}}\int_{\mathbb{R}^2}\int_{\mathbb{R}^2}e^{2i\pi x_1\omega_1}G_\varphi f(\omega,b)e^{2j\pi x_2\omega_2}\varphi(x-b)d\omega db$$
\end{theorem}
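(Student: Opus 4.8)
The plan is to reduce this reconstruction formula to the ordinary inversion formula for the two-sided QFT, applied for each fixed translation parameter $b$, and then to average over $b$ using the normalization constant $\|\varphi\|_2^2$. First I would observe that, by Definition \ref{def-GQFT}, for a fixed $b\in\mathbb{R}^2$ the quantity $\mathcal{G}_\varphi f(\omega,b)$ is exactly the two-sided QFT \eqref{QFT} of the auxiliary quaternion function $g_b(x):=f(x)\overline{\varphi(x-b)}$; that is, $\mathcal{G}_\varphi f(\omega,b)=\mathcal{F}_q(g_b)(\omega)$. Since $f,\varphi\in L^2(\mathbb{R}^2,\mathbb{H})$, the Cauchy--Schwarz inequality gives $\|g_b\|_1\le\|f\|_2\,\|\varphi\|_2<\infty$, so $g_b\in L^1(\mathbb{R}^2,\mathbb{H})$ for every $b$ and its QFT is well defined. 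Applying the two-sided QFT inversion formula (a standard property of the QFT, cf. \cite{Hitzar}) then yields
$$f(x)\overline{\varphi(x-b)}=g_b(x)=\int_{\mathbb{R}^2}e^{2\pi i x_1\omega_1}\,\mathcal{G}_\varphi f(\omega,b)\,e^{2\pi j x_2\omega_2}\,d\omega.$$
Here the ordering of the two exponential factors must be preserved exactly as written, because $i$ and $j$ do not commute: the left exponential (built from $i$) stays to the left of $\mathcal{G}_\varphi f(\omega,b)$ and the right exponential (built from $j$) stays to its right.

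The second step is to multiply this identity \emph{on the right} by $\varphi(x-b)$ and then integrate in $b$ over $\mathbb{R}^2$. On the left-hand side the product $\overline{\varphi(x-b)}\varphi(x-b)=|\varphi(x-b)|^2$ is a real, hence central, scalar, so it factors cleanly out of the quaternion multiplication and, by translation invariance of Lebesgue measure,
$$\int_{\mathbb{R}^2}f(x)\,|\varphi(x-b)|^2\,db=f(x)\int_{\mathbb{R}^2}|\varphi(x-b)|^2\,db=f(x)\,\|\varphi\|_2^2.$$
Dividing by $\|\varphi\|_2^2$ and substituting the integrated right-hand side produces precisely the claimed reconstruction formula.

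The one point requiring genuine care — and which I expect to be the main technical obstacle — is the legitimacy of interchanging the $\omega$-integration coming from the QFT inversion with the $b$-integration, since the inversion integral in general converges only in the $L^2$ sense rather than absolutely. The clean way around this is to establish the identity first on a dense subclass (for instance Schwartz-type windows $\varphi$ and functions $f$ for which $g_b$ lies in $L^1\cap L^2$ with enough decay), where every integral converges absolutely and Fubini's theorem applies without difficulty, and then to extend the formula to all of $L^2(\mathbb{R}^2,\mathbb{H})$ by density together with the Plancherel identity for the GQFT recorded in \cite{Elkassimi}. Throughout the argument one must consistently respect the non-commutative placement of the $i$- and $j$-exponentials; this ordering bookkeeping is the only essential way in which the quaternionic computation departs from the classical Gabor reconstruction proof.
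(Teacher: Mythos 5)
The paper does not actually prove this theorem here — it is quoted from \cite{Elkassimi} — but your argument is correct and is essentially the standard proof given there: identify $\mathcal{G}_\varphi f(\cdot,b)$ as the QFT of $g_b(x)=f(x)\overline{\varphi(x-b)}$, invert the QFT for fixed $b$, right-multiply by $\varphi(x-b)$ so that $\overline{\varphi(x-b)}\varphi(x-b)=|\varphi(x-b)|^2$ is a real central factor, and integrate in $b$. Your attention to the non-commutative ordering of the $i$- and $j$-exponentials and to justifying the Fubini/inversion step on a dense subclass is exactly the right care to take; no gap.
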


\begin{theorem}[Plancherel theorem ]\label{Parseval-GQFT}
  Let $\varphi$ be quaternion window function and\\ $f\in L^{2}(\mathbb{R}^2,\mathbb{H})$, then we have
  \begin{equation}\label{parseval1}
  \|\mathcal{G}_{\varphi}\{f\}\|^2_{L^2(\mathbb{R}^2\times\mathbb{R}^2,\mathbb{H})}=\|f\|^2_{2}\|\varphi\|^2_{2}
  \end{equation}

\end{theorem}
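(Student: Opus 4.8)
The plan is to reduce the identity to the Plancherel theorem for the ordinary two-sided quaternion Fourier transform $\mathcal{F}_q$ by freezing the translation variable $b$. For each fixed $b\in\mathbb{R}^2$, I would set $F_b(x)=f(x)\overline{\varphi(x-b)}$. Comparing \eqref{eqGQFT} with \eqref{QFT}, I observe that the two exponential kernels $e^{-2\pi ix_1\omega_1}$ and $e^{-2\pi jx_2\omega_2}$ occupy exactly the same left and right positions in both transforms, so that $\mathcal{G}_{\varphi}f(\omega,b)=\mathcal{F}_q(F_b)(\omega)$. In other words, the $\omega$-section of the GQFT is precisely the QFT of the windowed function $F_b$.

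First I would verify that $F_b\in L^2(\mathbb{R}^2,\mathbb{H})$ for almost every $b$, which follows from $f,\varphi\in L^2$ together with Cauchy--Schwarz. Then, invoking the Plancherel theorem for $\mathcal{F}_q$ (a known property of the two-sided QFT, cf.\ \cite{Hitzar}), which is an isometry on $L^2(\mathbb{R}^2,\mathbb{H})$, I obtain for each fixed $b$
\begin{equation}
\int_{\mathbb{R}^2}|\mathcal{G}_{\varphi}f(\omega,b)|^2\,d\omega=\int_{\mathbb{R}^2}|F_b(x)|^2\,dx=\int_{\mathbb{R}^2}|f(x)|^2\,|\varphi(x-b)|^2\,dx,
\end{equation}
where the last equality uses the multiplicativity of the quaternion modulus $|pq|=|p||q|$ together with $|\overline{q}|=|q|$, so that $|f(x)\overline{\varphi(x-b)}|=|f(x)|\,|\varphi(x-b)|$.

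Next I would integrate this identity in $b$ over $\mathbb{R}^2$ and apply Tonelli's theorem (all integrands are nonnegative) to exchange the order of integration, giving
\begin{equation}
\|\mathcal{G}_{\varphi}\{f\}\|^2_{L^2(\mathbb{R}^2\times\mathbb{R}^2,\mathbb{H})}=\int_{\mathbb{R}^2}|f(x)|^2\left(\int_{\mathbb{R}^2}|\varphi(x-b)|^2\,db\right)dx.
\end{equation}
The inner integral equals $\|\varphi\|_2^2$ by the translation invariance of Lebesgue measure (substitute $b\mapsto x-b$) and is independent of $x$; pulling it out yields $\|f\|_2^2\,\|\varphi\|_2^2$, which is exactly \eqref{parseval1}.

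The modulus factorization and the Tonelli exchange are routine. I expect the main obstacle to be the first reduction: because $\mathbb{H}$ is non-commutative, one must confirm that the two-sided kernel placement in $\mathcal{G}_\varphi$ matches that of $\mathcal{F}_q$ verbatim, so that the QFT Plancherel identity applies to $F_b$ without a spurious reordering of factors. It is precisely the collapse of non-commutativity under the modulus, $|pq|=|p||q|$, that rescues the norm computation and lets everything proceed as in the commutative Gabor setting.
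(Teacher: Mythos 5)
Your argument is correct, and it is the standard route: rewrite $\mathcal{G}_{\varphi}f(\omega,b)=\mathcal{F}_q\bigl(f\,\overline{\varphi(\cdot-b)}\bigr)(\omega)$, apply the QFT Plancherel identity in $\omega$ for fixed $b$, use $|p q|=|p|\,|q|$ and $|\overline q|=|q|$ to factor the modulus, and finish with Tonelli and translation invariance. Note that the present paper does not prove this theorem at all -- it is imported from the authors' earlier work \cite{Elkassimi} -- so there is no in-paper proof to compare against; your derivation is the one that reference (and the analogous windowed-QFT literature, e.g.\ \cite{Bahri}) carries out. One small imprecision: Cauchy--Schwarz gives $F_b=f\,\overline{\varphi(\cdot-b)}\in L^1$ for \emph{every} $b$ (which is what makes the defining integral converge), but it does not give $F_b\in L^2$; the correct justification for $F_b\in L^2$ for almost every $b$ is Tonelli applied to $\int\int |f(x)|^2|\varphi(x-b)|^2\,dx\,db=\|f\|_2^2\|\varphi\|_2^2<\infty$, which forces the inner integral to be finite a.e.\ in $b$ -- exactly the computation you perform afterwards anyway. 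You are also right to single out the kernel placement as the only point where non-commutativity could bite: the sandwich structure of \eqref{eqGQFT} matches \eqref{QFT} verbatim with $F_b$ in the middle, and the modulus identity $|pq|=|p||q|$ absorbs the rest.
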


\section{Uncertainty Principles}

\begin{theorem}[Hausdorff-Young inequality]
If $1\leq p\leq 2 $ and letting $p^{'}$ be such that $\frac{1}{p}+\frac{1}{p^{'}}=1$, then, for all $f\in L^{p}$ we have
\begin{equation}\label{hausdorff-QFT}
  \|\mathcal{F}_{q}\{f\}\|_{q,p^{'}}\leq \|f\|_{p}
\end{equation}
  where,
  $$\|\mathcal{F}_{q}\{f\}\|_{q,p^{'}}=\left( \int_{\mathbb{R}^2}|\mathcal{F}_{q}\{f\}(\omega)|_{q}^{p^{'}}d\omega\right)^{\frac{1}{p^{'}}}$$
with
$$|\mathcal{F}_{q}\{f\}(\omega)|_{q}=|\mathcal{F}_{q}\{f_0\}(\omega)|+|\mathcal{F}_{q}\{f_1\}(\omega)|+|\mathcal{F}_{q}\{f_2\}(\omega)|+|\mathcal{F}_{q}\{f_3\}(\omega)|$$
\end{theorem}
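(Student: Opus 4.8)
The plan is to reproduce the classical derivation of the Hausdorff--Young inequality: prove the two endpoint estimates at $p=1$ and $p=2$ and interpolate between them with the Riesz--Thorin theorem. The quaternionic output and the two-sided, non-commuting kernel of \eqref{QFT} make it awkward to interpolate the full transform directly, so I would exploit the fact that the norm appearing on the left is, by its very definition, the $\ell^{1}$-sum $|\mathcal{F}_{q}\{f\}(\omega)|_{q}=\sum_{\ell=0}^{3}|\mathcal{F}_{q}\{f_{\ell}\}(\omega)|$ of the moduli of the transforms of the four real scalar components of $f=f_{0}+if_{1}+jf_{2}+kf_{3}$. It therefore suffices to establish the scalar inequality $\|\mathcal{F}_{q}\{f_{\ell}\}\|_{p'}\le\|f_{\ell}\|_{p}$ for each real-valued $f_{\ell}$ and then recombine by Minkowski's inequality.

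Fix one real scalar component $g=f_{\ell}$. For the endpoint $p=1$, $p'=\infty$ I would read the bound off the definition directly: because $|e^{-2\pi ix_{1}\omega_{1}}|=|e^{-2\pi jx_{2}\omega_{2}}|=1$ for all $x,\omega$, the triangle inequality for the quaternion modulus gives
\begin{equation}
|\mathcal{F}_{q}\{g\}(\omega)|\le\int_{\mathbb{R}^{2}}|g(x)|\,dx=\|g\|_{1},
\end{equation}
hence $\|\mathcal{F}_{q}\{g\}\|_{\infty}\le\|g\|_{1}$. For the endpoint $p=2$, $p'=2$ I would use the Plancherel isometry for the QFT, $\|\mathcal{F}_{q}\{g\}\|_{2}=\|g\|_{2}$, which is obtained exactly as in Theorem \ref{Parseval-GQFT} (or specialized from it by taking the window to a Dirac-type limit). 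Viewing $g\mapsto\mathcal{F}_{q}\{g\}$ as a linear operator from scalar $L^{p}$ into the quaternion-valued $L^{p'}$, the Riesz--Thorin interpolation theorem applied to these two endpoints yields $\|\mathcal{F}_{q}\{g\}\|_{p'}\le\|g\|_{p}$ for every $1\le p\le2$ with $\tfrac1p+\tfrac1{p'}=1$.

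Summing the four resulting component estimates and using the triangle inequality in $L^{p'}$ then gives
\begin{equation}
\|\mathcal{F}_{q}\{f\}\|_{q,p'}=\Big\|\sum_{\ell=0}^{3}|\mathcal{F}_{q}\{f_{\ell}\}|\Big\|_{p'}\le\sum_{\ell=0}^{3}\|\mathcal{F}_{q}\{f_{\ell}\}\|_{p'}\le\sum_{\ell=0}^{3}\|f_{\ell}\|_{p},
\end{equation}
which is the claimed inequality. The step I expect to be the main obstacle is the rigorous justification of interpolation in this quaternionic, two-sided setting: the Riesz--Thorin theorem is normally phrased for complex scalar operators, and the reduction to the real components $f_{\ell}$ — for which $\mathcal{F}_{q}\{f_{\ell}\}$ decomposes into four ordinary sine/cosine integrals amenable to vector-valued interpolation — is precisely the device that makes the machinery applicable. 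A secondary point that needs care is the passage between the component $\ell^{1}$-sum $\sum_{\ell}\|f_{\ell}\|_{p}$ and the quaternion modulus norm $\|f\|_{p}$, which controls the constant in the final estimate.
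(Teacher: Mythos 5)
The first thing to note is that the paper contains no proof of this theorem: it is stated in Section 3 without argument, as a result imported from the QFT literature (cf.\ the Pitt's-inequality paper of Chen, Kou and Liu cited as \cite{Benedicks}), so there is no in-paper proof to compare yours against. Judged on its own terms, your skeleton --- the $L^{1}\to L^{\infty}$ endpoint from unimodularity of the two kernels in \eqref{QFT} plus the triangle inequality for the quaternion modulus, the $L^{2}\to L^{2}$ endpoint from QFT Plancherel, and Riesz--Thorin interpolation after reducing to the four real components $f_{\ell}$, whose transforms split into real cosine/sine integrals expressible through the classical two-dimensional Fourier transform --- is the standard and essentially correct route to $\|\mathcal{F}_{q}\{f_{\ell}\}\|_{p'}\le\|f_{\ell}\|_{p}$ for each real component. (One caveat: Plancherel for the QFT must be quoted or proved directly; recovering it from Theorem \ref{Parseval-GQFT} by a ``Dirac-type limit'' of the window is not a legitimate shortcut, since that theorem itself rests on QFT Plancherel.)

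The genuine gap is the recombination step, and it cannot be closed as the theorem is stated. Your chain terminates at $\sum_{\ell=0}^{3}\|f_{\ell}\|_{p}$, which is bounded by $4\|f\|_{p}$ but not by $\|f\|_{p}$, and the loss of this factor is not an artifact of your method: inequality \eqref{hausdorff-QFT} with constant $1$ is false for the $\ell^{1}$-type quantity $|\cdot|_{q}$ used in the statement. Take $p=p'=2$ and $f=(1+i+j+k)g$ with $g$ real-valued and nonzero; then $f_{0}=f_{1}=f_{2}=f_{3}=g$, so $|\mathcal{F}_{q}\{f\}(\omega)|_{q}=4|\mathcal{F}_{q}\{g\}(\omega)|$ and $\|\mathcal{F}_{q}\{f\}\|_{q,2}=4\|g\|_{2}=2\|f\|_{2}>\|f\|_{2}$. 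So your argument proves a correct Hausdorff--Young inequality up to the constant $4$, while the constant-$1$ statement printed in the paper requires either a corrected constant or a different definition of $|\cdot|_{q}$ (for instance the quaternion modulus of $\mathcal{F}_{q}\{f\}(\omega)$ itself, for which both endpoint bounds do hold with constant $1$). You should state explicitly which version you are proving, and in the interpolation step spell out the reduction of $\mathcal{F}_{q}\{f_{\ell}\}$ to classical Fourier transforms rather than invoking Riesz--Thorin directly on a quaternion-valued operator.
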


\section{Local uncertainty principle}
Now, we  give some versions of local uncertainty principles like Benedicks uncertainty principle. \\
First, we start by giving the following Hausdorff-Young's lemma for the GQFT.

\begin{lemma}
  Let $\varphi\in L^p(\mathbb{R}^2,\mathbb{H})$, $f\in L^q(\mathbb{R}^2,\mathbb{H})$ and $p,q \in[1,+\infty[$ with $\frac{1}{p}+\frac{1}{q}=1$, we have
\begin{equation}\label{young-inequality}
\|\mathcal{G}_{\varphi}\{f\}(\omega,b)\|_{\infty} \leq \|f\|_{L^q(\mathbb{R}^2,\mathbb{H})}\|\varphi\|_{L^p(\mathbb{R}^2,\mathbb{H})}
\end{equation}
\end{lemma}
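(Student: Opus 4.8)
The plan is to estimate the modulus of the integral defining $\mathcal{G}_{\varphi}f(\omega,b)$ pointwise in $(\omega,b)$ and to observe that the resulting bound is uniform, which immediately controls the supremum norm. First I would pass the modulus inside the integral, using the triangle inequality for quaternion-valued integrals, to write
$$|\mathcal{G}_{\varphi}f(\omega,b)|\leq \int_{\mathbb{R}^2}\left|e^{-2\pi ix_1\omega_1}f(x)\overline{\varphi(x-b)}e^{-2\pi jx_2\omega_2}\right|dx.$$

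Next I would exploit the multiplicativity $|pq|=|p||q|$ of the quaternion modulus to factor the integrand. Since $i$ and $j$ are imaginary units, the two exponential kernels $e^{-2\pi ix_1\omega_1}$ and $e^{-2\pi jx_2\omega_2}$ are unit quaternions, so their moduli equal $1$; combined with $|\overline{\varphi}|=|\varphi|$, the integrand collapses to $|f(x)|\,|\varphi(x-b)|$. This is the step where the non-commutativity of $\mathbb{H}$ must be watched, but because the modulus is multiplicative regardless of the order of the factors, the ordering of the four quaternionic terms is irrelevant and the estimate goes through cleanly.

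The final step is Hölder's inequality with the conjugate exponents $p,q$ (so that $\frac{1}{p}+\frac{1}{q}=1$), which gives
$$\int_{\mathbb{R}^2}|f(x)|\,|\varphi(x-b)|\,dx\leq \left(\int_{\mathbb{R}^2}|f(x)|^q\,dx\right)^{\frac{1}{q}}\left(\int_{\mathbb{R}^2}|\varphi(x-b)|^p\,dx\right)^{\frac{1}{p}}.$$
By the translation invariance of Lebesgue measure the second factor equals $\|\varphi\|_{L^p(\mathbb{R}^2,\mathbb{H})}$, independent of $b$, while the first is $\|f\|_{L^q(\mathbb{R}^2,\mathbb{H})}$. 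Since the bound $\|f\|_{L^q(\mathbb{R}^2,\mathbb{H})}\|\varphi\|_{L^p(\mathbb{R}^2,\mathbb{H})}$ does not depend on $(\omega,b)$, taking the supremum over all $(\omega,b)$ yields \eqref{young-inequality}. The only real obstacle is justifying the very first step, namely the triangle inequality for the quaternion-valued (Bochner-type) integral; this is routine once one recalls that the modulus obeys the usual triangle inequality on $\mathbb{H}$, so the heart of the argument is simply Hölder combined with the unimodularity of the exponential kernels.
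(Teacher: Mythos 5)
Your proposal is correct and follows essentially the same route as the paper's own proof: pass the modulus inside the integral, use the unimodularity of the two exponential kernels and $|\overline{\varphi}|=|\varphi|$ to reduce to $\int|f(x)||\varphi(x-b)|dx$, and conclude with H\"{o}lder's inequality. Your version is slightly more careful in spelling out the multiplicativity of the quaternion modulus and the translation invariance that makes the bound uniform in $(\omega,b)$, but these are just the details the paper leaves implicit.
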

 \begin{proof}
   We have
\begin{eqnarray*}
% \nonumber % Remove numbering (before each equation)
  |\mathcal{G}_{\varphi}\{f\}(\omega,b)| &=& |\int_{\mathbb{R}^2}e^{-2\pi ix_1\omega_1}f(x)\overline{\varphi(x-b)}e^{-2\pi jx_2\omega_2}dx| \\
      &\leq& \int_{\mathbb{R}^2} |f(x)||\overline{\varphi(x-y)}|dx
\end{eqnarray*}
Using H\"{o}lder inequality we get our result
\begin{equation*}
\|\mathcal{G}_{\varphi}\{f\}(\omega,b)\|_{L^{\infty}(\mathbb{R}^2 \times\mathbb{R}^2,\mathbb{H})} \leq\|f\|_{L^q(\mathbb{R}^2,\mathbb{H})}\|\varphi\|_{L^p(\mathbb{R}^2,\mathbb{H})}
\end{equation*}
\end{proof}

\begin{theorem}
  Let $\varphi$ a quaternion  windowed function, with $\|\varphi\|_2=1$. For $ f\in L^2(\mathbb{R}^2,\mathbb{H})$ that $\|f\|_2=1$, then for $\Sigma \subset \mathbb{R}^2 \times\mathbb{R}^2$ and $0\geq\varepsilon<1$ such that,
 $$ \int\int_{\Sigma}|\mathcal{G}_{\varphi}\{f\}(\omega,b)|^2d\omega db\geq1-\varepsilon.$$
 we have $m(\Sigma)\geq1-\varepsilon$.\\
 Where $m(\Sigma)$ is the Lebesgue measure of $\Sigma$.
\end{theorem}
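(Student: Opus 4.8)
The plan is to combine the Plancherel identity for the GQFT with the uniform pointwise bound furnished by the Hausdorff-Young lemma, and then to estimate the localized energy from above by the Lebesgue measure of $\Sigma$. The whole argument is a short chain of inequalities once the two preparatory facts are in place.

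First I would record the normalization coming from the Plancherel theorem. Since $\|f\|_2=1$ and $\|\varphi\|_2=1$, Theorem \ref{Parseval-GQFT} gives
\begin{equation*}
\|\mathcal{G}_{\varphi}\{f\}\|^2_{L^2(\mathbb{R}^2\times\mathbb{R}^2,\mathbb{H})}=\|f\|_2^2\,\|\varphi\|_2^2=1,
\end{equation*}
so the total energy of the transform is exactly $1$. Next, the key ingredient is a uniform sup-bound on $\mathcal{G}_\varphi f$: applying the Hausdorff-Young lemma \eqref{young-inequality} with the self-dual choice $p=q=2$ (which indeed satisfies $\tfrac1p+\tfrac1q=1$) yields
\begin{equation*}
\|\mathcal{G}_{\varphi}\{f\}(\omega,b)\|_{\infty}\leq\|f\|_{L^2(\mathbb{R}^2,\mathbb{H})}\,\|\varphi\|_{L^2(\mathbb{R}^2,\mathbb{H})}=1,
\end{equation*}
so that $|\mathcal{G}_\varphi f(\omega,b)|\leq 1$, and hence $|\mathcal{G}_\varphi f(\omega,b)|^2\leq 1$, for every $(\omega,b)\in\mathbb{R}^2\times\mathbb{R}^2$.

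With these in hand, I would simply combine the hypothesis with the pointwise bound. Starting from the concentration assumption and then replacing the integrand by its upper bound $1$ on $\Sigma$,
\begin{equation*}
1-\varepsilon\leq\int\int_{\Sigma}|\mathcal{G}_{\varphi}\{f\}(\omega,b)|^2\,d\omega\,db\leq\int\int_{\Sigma}1\,d\omega\,db=m(\Sigma),
\end{equation*}
which is precisely the desired inequality $m(\Sigma)\geq 1-\varepsilon$.

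I do not expect a genuine obstacle here, since the argument rests entirely on two already-established results. The only step requiring a little care is verifying that the Hausdorff-Young lemma is legitimately applicable with the endpoint exponents $p=q=2$ and that it delivers exactly the constant $\|f\|_2\|\varphi\|_2=1$; granting that sup-bound, the conclusion follows by monotonicity of the integral over $\Sigma$, and the Plancherel identity is what guarantees the global energy equals $1$ so that the normalization is consistent.
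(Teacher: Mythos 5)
Your argument is correct and is essentially the paper's own proof: both rest on the sup-norm bound $\|\mathcal{G}_{\varphi}\{f\}\|_{\infty}\leq\|f\|_{2}\|\varphi\|_{2}=1$ from the Hausdorff--Young lemma with $p=q=2$, followed by bounding the integral over $\Sigma$ by $m(\Sigma)$. The preliminary Plancherel computation you include is harmless but not actually used in the chain of inequalities, and the paper omits it.
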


\begin{proof}
  For $f\in {L^2(\mathbb{R}^2,\mathbb{H})}$
  we have by the lemma \ref{young-inequality}

  \begin{equation*}
\|\mathcal{G}_{\varphi}\{f\}(\omega,b)\|_{L^{\infty}(\mathbb{R}^2 \times\mathbb{R}^2,\mathbb{H})} \leq\|f\|_{2}\|\varphi\|_{2}
\end{equation*}
 From the relation above, we get
 \begin{eqnarray*}
 % \nonumber % Remove numbering (before each equation)
  1-\varepsilon\leq\int\int_{\Sigma}|\mathcal{G}_{\varphi}\{f\}(\omega,b)|^2d\omega db &\leq&\|\mathcal{G}_{\varphi}\{f\}\|^2_{L^{\infty}(\mathbb{R}^2 \times\mathbb{R}^2,\mathbb{H})} m(\Sigma)\\
   &\leq& m(\Sigma)\|f\|^2_{2}\|\varphi\|^2_{2}\\
   &=&m(\Sigma)
  \end{eqnarray*}
  then, $$1-\varepsilon\leq m(\Sigma)$$

\end{proof}
\begin{theorem}\label{concentration1-theo}
  Let $\Sigma\subset \mathbb{R}^2\times\mathbb{R}^2$ such that $0<m(\Sigma)<1$. Then for all $f, \varphi\in L^2(\mathbb{R}^2,\mathbb{H})$, we have,
  \begin{equation}\label{concentration1}
\|f\|_{L^2(\mathbb{R}^2,\mathbb{H})}\|\varphi\|_{L^2(\mathbb{R}^2,\mathbb{H})} \leq   \frac{1}{\sqrt{1-m(\Sigma)}}\left(\int\int_{\Sigma^{c}}|\mathcal{G}_{\varphi}\{f\}(\omega,b)|^2 d\omega db\right)^{\frac{1}{2}}
  \end{equation}

\end{theorem}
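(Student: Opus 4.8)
The plan is to combine the Plancherel identity (Theorem \ref{Parseval-GQFT}) with the uniform bound furnished by the Hausdorff--Young lemma \eqref{young-inequality} in the special case $p=q=2$. Writing $F=\mathcal{G}_{\varphi}\{f\}$ for brevity, the idea is to split the total energy $\int\!\int_{\mathbb{R}^2\times\mathbb{R}^2}|F|^2$ into the part carried by $\Sigma$ and the part carried by its complement $\Sigma^{c}$, estimate the $\Sigma$-part crudely by the sup-norm of $F$ times the measure $m(\Sigma)$, and then isolate the $\Sigma^{c}$-part.

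First I would invoke Theorem \ref{Parseval-GQFT} to record that
$$\int\!\int_{\mathbb{R}^2\times\mathbb{R}^2}|F(\omega,b)|^2\,d\omega\,db=\|f\|_2^2\,\|\varphi\|_2^2,$$
and decompose the left-hand side as the sum of the integrals over $\Sigma$ and over $\Sigma^{c}$. Next, applying inequality \eqref{young-inequality} with $p=q=2$ gives $\|F\|_\infty\le\|f\|_2\,\|\varphi\|_2$, so that the integral over $\Sigma$ is bounded by $\|F\|_\infty^2\,m(\Sigma)\le\|f\|_2^2\,\|\varphi\|_2^2\,m(\Sigma)$. Substituting this estimate into the split identity yields
$$\|f\|_2^2\,\|\varphi\|_2^2\le\|f\|_2^2\,\|\varphi\|_2^2\,m(\Sigma)+\int\!\int_{\Sigma^{c}}|F|^2\,d\omega\,db,$$
and collecting the terms carrying the factor $\|f\|_2^2\|\varphi\|_2^2$ on one side produces $\bigl(1-m(\Sigma)\bigr)\|f\|_2^2\|\varphi\|_2^2\le\int\!\int_{\Sigma^{c}}|F|^2\,d\omega\,db$. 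Dividing by $1-m(\Sigma)$ and taking square roots gives precisely \eqref{concentration1}.

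There is no deep obstacle here; the argument is essentially a one-line energy-splitting estimate, and the difficulty has already been absorbed into the two auxiliary results quoted above. The only points that require care are that the division step is legitimate, which is guaranteed by the standing hypothesis $m(\Sigma)<1$ ensuring $1-m(\Sigma)>0$, and that the sup-norm control of $F$ genuinely arises from the $p=q=2$ instance of \eqref{young-inequality}. Note also that no normalization of $f$ or $\varphi$ is needed, since both sides of \eqref{concentration1} are homogeneous of the same degree in $f$ and in $\varphi$.
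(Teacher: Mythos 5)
Your proof is correct and follows essentially the same route as the paper: split the total energy via Plancherel into the $\Sigma$ and $\Sigma^{c}$ parts, bound the $\Sigma$ part by $m(\Sigma)\,\|f\|_2^2\|\varphi\|_2^2$ using the sup-norm estimate, and rearrange. Your version is in fact slightly more explicit than the paper's, since you state that the sup-norm bound comes from the $p=q=2$ case of the Hausdorff--Young lemma, a step the paper leaves implicit.
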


\begin{proof}
  For every function $f\in L^2(\mathbb{R}^2,\mathbb{H})$,
  \begin{eqnarray*}
  % \nonumber % Remove numbering (before each equation)
    \|\mathcal{G}_{\varphi}\{f\}\|^2_{L^2(\mathbb{R}^2\times\mathbb{R}^2,\mathbb{H})} &=& \int_{\mathbb{R}^2}\int_{\mathbb{R}^2}|\mathcal{G}_{\varphi}\{f\}(\omega,b)|^2 d\omega db \\
     &=& \int\int_{\Sigma}|\mathcal{G}_{\varphi}\{f\}(\omega,b)|^2 d\omega db+\int\int_{\Sigma^{c}}|\mathcal{G}_{\varphi}\{f\}(\omega,b)|^2 d\omega db \\
     &\leq& m(\Sigma)\|\varphi\|^2_{2}\|f\|^2_{2}+ \int\int_{\Sigma^{c}}|\mathcal{G}_{\varphi}\{f\}(\omega,b)|^2 d\omega db
  \end{eqnarray*}
 Then,  $$\int\int_{\Sigma^{c}}|\mathcal{G}_{\varphi}\{f\}(\omega,b)|^2 d\omega db\geq \|\mathcal{G}_{\varphi}\{f\}\|^2_{L^2(\mathbb{R}^2\times\mathbb{R}^2,\mathbb{H})}-m(\Sigma)\|\varphi\|^2_{2}\|f\|^2_{2}$$

Using the Plancherel formula \eqref{Parseval-GQFT}, we get

$$\|f\|_{L^2(\mathbb{R}^2,\mathbb{H})}\|\varphi\|_{L^2(\mathbb{R}^2,\mathbb{H})} \leq   \frac{1}{\sqrt{1-m(\Sigma)}}\left(\int\int_{\Sigma^{c}}|\mathcal{G}_{\varphi}\{f\}(\omega,b)|^2 d\omega db\right)^{\frac{1}{2}}$$

\end{proof}
\begin{remark}
  This shows that for a non zero function $f$, if its  Gabor transform  $\mathcal{G}^{\varphi}\{f\}$ is  concentrated on a set $\Sigma$ of volume such that, $0<m(\Sigma)<1 $ then $f\equiv0$ or $\varphi\equiv0$.
\end{remark}

\begin{theorem}
  Let $s>0$. There exists a constant $C_{s}>0$ such that, for $f,\varphi\in L^{2}(\mathbb{R}^2,\mathbb{H})$
  \begin{equation}\label{concentration2}
   \|f\|_{L^{2}(\mathbb{R}^2,\mathbb{H})}\|\varphi\|_{L^{2}(\mathbb{R}^2,\mathbb{H})} \leq C_{s}\left(\int\int_{\mathbb{R}^2\times\mathbb{R}^2}
    {|(\omega,y)|}^{2s}|\mathcal{G}^{\varphi}\{f\}(\omega,y)|^2d\omega dy\right)^{\frac{1}{2}}
  \end{equation}
\end{theorem}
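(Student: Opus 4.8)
The plan is to adapt the set-splitting idea behind Theorem~\ref{concentration1-theo}, but now to split phase space $\mathbb{R}^2\times\mathbb{R}^2\cong\mathbb{R}^4$ into a ball $B_r=\{(\omega,y):|(\omega,y)|<r\}$ and its complement $B_r^c$, and then to choose the radius $r>0$ optimally. I may assume that the right-hand side of \eqref{concentration2} is finite, since otherwise the inequality holds trivially. The two ingredients I would combine are the Plancherel identity \eqref{parseval1} and the $L^\infty$ estimate coming from the Hausdorff--Young lemma \eqref{young-inequality} in the case $p=q=2$, namely $\|\mathcal{G}_{\varphi}\{f\}\|_\infty\leq\|f\|_2\|\varphi\|_2$.

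First I would control the mass of the Gabor transform on the ball. Since $|\mathcal{G}_{\varphi}\{f\}(\omega,y)|^2\leq\|f\|_2^2\|\varphi\|_2^2$ pointwise, integration over $B_r$ gives
$$\int\int_{B_r}|\mathcal{G}_{\varphi}\{f\}(\omega,y)|^2\,d\omega\,dy\leq m(B_r)\,\|f\|_2^2\|\varphi\|_2^2=c\,r^4\,\|f\|_2^2\|\varphi\|_2^2,$$
where $c=\pi^2/2$ is the volume of the unit ball in $\mathbb{R}^4$. On the complement I would instead use that $|(\omega,y)|\geq r$, so that $1\leq r^{-2s}|(\omega,y)|^{2s}$ there, which yields
$$\int\int_{B_r^c}|\mathcal{G}_{\varphi}\{f\}(\omega,y)|^2\,d\omega\,dy\leq r^{-2s}\int\int_{\mathbb{R}^2\times\mathbb{R}^2}|(\omega,y)|^{2s}|\mathcal{G}_{\varphi}\{f\}(\omega,y)|^2\,d\omega\,dy.$$

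Adding the two estimates and invoking Plancherel \eqref{parseval1} to identify the full integral of $|\mathcal{G}_{\varphi}\{f\}|^2$ with $\|f\|_2^2\|\varphi\|_2^2$, I obtain
$$\|f\|_2^2\|\varphi\|_2^2\leq c\,r^4\,\|f\|_2^2\|\varphi\|_2^2+r^{-2s}\int\int_{\mathbb{R}^2\times\mathbb{R}^2}|(\omega,y)|^{2s}|\mathcal{G}_{\varphi}\{f\}(\omega,y)|^2\,d\omega\,dy.$$
The decisive step is the choice of $r$: picking $r$ so that $c\,r^4=\tfrac12$ absorbs the first term into the left-hand side and leaves $\tfrac12\|f\|_2^2\|\varphi\|_2^2$ bounded by $r^{-2s}$ times the weighted integral. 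Taking square roots then produces \eqref{concentration2} with the explicit value $C_s=\sqrt{2}\,r^{-s}=\sqrt{2}\,(2c)^{s/4}$.

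The argument has no serious obstacle; the only point requiring care is this balancing of the two terms. One must ensure that the chosen fixed radius makes $c\,r^4<1$, so that the ball contribution can legitimately be moved to the left-hand side, and one should note that the estimate is valid for \emph{every} $s>0$ because the complement bound uses only $|(\omega,y)|\geq r$. More generally, instead of fixing $c\,r^4=\tfrac12$ one could keep $r$ free and minimize the resulting right-hand side over $r>0$, which sharpens the constant $C_s$ but does not change the structure of the proof.
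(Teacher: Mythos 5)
Your proposal is correct and follows essentially the same route as the paper: the paper applies its Theorem~\ref{concentration1-theo} with $\Sigma=B_t$ for $t$ small enough that $m(B_t)<1$ and then bounds $1\leq t^{-2s}|(\omega,b)|^{2s}$ on the complement, which is exactly your ball/complement splitting with the $L^\infty$ bound and Plancherel, merely re-derived inline. The only (harmless) difference is that you fix the radius explicitly by $m(B_r)=\tfrac12$ and so obtain an explicit constant $C_s=\sqrt{2}\,(2c)^{s/4}$, whereas the paper leaves $t$ unspecified and states $C_s$ in terms of $t^{s}\sqrt{1-m(B_t)}$.
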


\begin{proof}
  Let $0<r\leq1$ be a real number and $B_{r}=\{(\omega,b)\in \mathbb{R}^2\times\mathbb{R}^2: |(\omega,b)|<r\}$ the ball of center 0 and radius $r$ in $\mathbb{R}^2\times\mathbb{R}^2$. Fix $0<t\leq1$ small enough such that $m(B_{t})<1$. Therefore by the inequality \eqref{concentration1} we obtain
  \begin{eqnarray*}
  % \nonumber % Remove numbering (before each equation)
    \|f\|^2_{L^2(\mathbb{R}^2,\mathbb{H})}\|\varphi\|^2_{L^2(\mathbb{R}^2,\mathbb{H})}
     &\leq&  \frac{1}{t^{2s}(1-m(B_{t}))}\int\int_{|(\omega,b)|>t} t^{2s}|\mathcal{G}_{\varphi}\{f\}(\omega,b)|^2d\omega db   \\
     &\leq&  \frac{1}{t^{2s}(1-m(B_{t}))}\int\int_{|(\omega,b)|>t} {|(\omega,b)|}^{2s}|\mathcal{G}_{\varphi}\{f\}(\omega,b)|^2d\omega db \\
     &\leq&\frac{1}{t^{2s}(1-m(B_{t}))}\int\int_{\mathbb{R}^2\times\mathbb{R}^2}{|(\omega,b)|}^{2s}|\mathcal{G}_{\varphi}\{f\}(\omega,b)|^2
    d\omega db
  \end{eqnarray*}
  we take the square of the two  sided of the inequality
 $$\|f\|^2_{L^2(\mathbb{R}^2,\mathbb{H})}\|\varphi\|^2_{L^2(\mathbb{R}^2,\mathbb{H})}\leq\frac{1}{t^{2s}(1-m(B_{t}))}\int\int_{\mathbb{R}^2\times\mathbb{R}^2}{|(\omega,b)|}^{2s}|\mathcal{G}_{\varphi}\{f\}(\omega,b)|^2
    d\omega db$$
    we get
    $$\|f\|_{L^2(\mathbb{R}^2,\mathbb{H})}\|\varphi\|_{L^2(\mathbb{R}^2,\mathbb{H})}\leq\frac{1}{t^{s}\sqrt{1-m(B_{t})}}\left(\int\int_{\mathbb{R}^2\times\mathbb{R}^2}{|(\omega,b)|}^{2s}|\mathcal{G}_{\varphi}\{f\}(\omega,b)|^2
    d\omega db\right)^{\frac{1}{2}}$$
    We obtain the desired result by taking $C_{s}=t^{s}\sqrt{1-m(B_{t})}$
\end{proof}

\subsection{Benedicks-type uncertainty principle}\leavevmode\par
 First, we remind the definitions of some notions.
 \begin{definition}
   Let $\Sigma$ a measurable subset of $\mathbb{R}^2 \times\mathbb{R}^2$ and $\varphi\in L^2(\mathbb{R}^2,\mathbb{H})$ a nonzero window function. Then, \\
  $(1)$ We say that $\Sigma$ is weakly annihilating, if any function $f\in L^2(\mathbb{R}^2,\mathbb{H})$ vanishes when its GQFT $\mathcal{G}_{\varphi}\{f\}$ with respect to window $\varphi$ is supported in $\Sigma$. \\
  $(2)$ We say that $\Sigma$ is strongly annihilating, if there exists a  constant $C(\Sigma)>0$, such that for every function $f\in \mathcal{G}_{\varphi}\{f\}$
   $$\|f\|^2_{L^2(\mathbb{R}^2,\mathbb{H})}\|\varphi\|^2_{L^2(\mathbb{R}^2,\mathbb{H})}\leq C(\Sigma)\int\int_{\mathbb{R}^2\times\mathbb{R}^2}|\mathcal{G}_{\varphi}\{f\}(\omega,b)|^2 d\omega db$$
   The constant $C(\Sigma)$ will be called the annihilation constant of $C(\Sigma)$
 \end{definition}
\begin{lemma}
  Let $\varphi$ be a nonzero window function. Then, \\
  $(1)$ If $\|P_{\Sigma}P_{\varphi}\|<1$ , then for all $f\in L^2(\mathbb{R}^2,\mathbb{H})$,
  \begin{equation}\label{concentration 2}
     \|f\|^2_{L^2(\mathbb{R}^2,\mathbb{H})}\|\varphi\|^2_{L^2(\mathbb{R}^2,\mathbb{H})}\leq \frac{1}{\sqrt{1-\|P_{\Sigma}P_{\varphi}\|^2}}\int\int_{\Sigma^{c}}|\mathcal{G}_{\varphi}\{f\}(\omega,b)|^2 d\omega db
  \end{equation}
  $(2)$ If $\Sigma$ is strongly annihilating, $\|P_{\Sigma}P_{\varphi}\|<1$
\end{lemma}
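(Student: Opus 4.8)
The plan is to read $P_\varphi$ as the orthogonal projection of $L^2(\mathbb{R}^2\times\mathbb{R}^2,\mathbb{H})$ onto the range of the transform $\mathcal{G}_\varphi$, and $P_\Sigma$ as the multiplication (restriction) operator $F\mapsto\chi_\Sigma F$, so that $P_{\Sigma^c}=I-P_\Sigma$. The single fact driving both parts is that $\mathcal{G}_\varphi f$ always lies in the range of $\mathcal{G}_\varphi$, hence $P_\varphi\bigl(\mathcal{G}_\varphi f\bigr)=\mathcal{G}_\varphi f$ for every $f\in L^2(\mathbb{R}^2,\mathbb{H})$. Combined with the Plancherel identity \eqref{parseval1}, this lets me translate statements about $f$ into statements about the projections $P_\Sigma,P_\varphi$ acting on $F:=\mathcal{G}_\varphi f$.

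For part $(1)$ I would start from the orthogonal splitting
$$\|\mathcal{G}_\varphi f\|^2_{L^2(\mathbb{R}^2\times\mathbb{R}^2,\mathbb{H})}=\|P_\Sigma \mathcal{G}_\varphi f\|^2+\int\int_{\Sigma^c}|\mathcal{G}_\varphi f(\omega,b)|^2\,d\omega\,db.$$
Using $\mathcal{G}_\varphi f=P_\varphi\mathcal{G}_\varphi f$ I rewrite the first term as $\|P_\Sigma P_\varphi\mathcal{G}_\varphi f\|^2$ and bound it by $\|P_\Sigma P_\varphi\|^2\,\|\mathcal{G}_\varphi f\|^2$ via submultiplicativity of the operator norm. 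Moving that term to the left yields
$$\bigl(1-\|P_\Sigma P_\varphi\|^2\bigr)\,\|\mathcal{G}_\varphi f\|^2_{L^2(\mathbb{R}^2\times\mathbb{R}^2,\mathbb{H})}\le \int\int_{\Sigma^c}|\mathcal{G}_\varphi f(\omega,b)|^2\,d\omega\,db,$$
and since $\|P_\Sigma P_\varphi\|<1$ the factor is positive and may be divided out. Finally I substitute $\|\mathcal{G}_\varphi f\|^2=\|f\|_2^2\|\varphi\|_2^2$ from Theorem \ref{Parseval-GQFT}, which produces the desired inequality (with annihilation constant $\bigl(1-\|P_\Sigma P_\varphi\|^2\bigr)^{-1}$).

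For part $(2)$ I would show that strong annihilation forces a uniform spectral gap. Writing $F=\mathcal{G}_\varphi f$ and using \eqref{parseval1}, the strong-annihilation estimate reads $\|F\|^2\le C(\Sigma)\int\int_{\Sigma^c}|F|^2=C(\Sigma)\bigl(\|F\|^2-\|P_\Sigma F\|^2\bigr)$, which rearranges to $\|P_\Sigma F\|^2\le\bigl(1-C(\Sigma)^{-1}\bigr)\|F\|^2$ for every $F$ in the range of $\mathcal{G}_\varphi$. Because $P_\varphi$ is the orthogonal projection onto (the closure of) that range, taking the supremum over unit vectors $F=P_\varphi F$ gives $\|P_\Sigma P_\varphi\|^2\le 1-C(\Sigma)^{-1}<1$, as required; here one uses that $C(\Sigma)\ge 1$ automatically, since $\int_{\Sigma^c}|F|^2\le\|F\|^2$.

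The step I expect to carry the real weight is justifying the reproducing identity $P_\varphi\mathcal{G}_\varphi f=\mathcal{G}_\varphi f$ together with, in part $(2)$, the passage to the operator-norm supremum. This requires knowing precisely that $P_\varphi$ is the orthogonal projection onto the closed range of $\mathcal{G}_\varphi$ and that the images $\mathcal{G}_\varphi f$ are dense in that range, so that the supremum defining $\|P_\Sigma P_\varphi\|$ is indeed controlled by the normalized images $F=\mathcal{G}_\varphi f$; once these structural facts are in place, the inequalities themselves are routine consequences of the Plancherel formula and submultiplicativity of the operator norm.
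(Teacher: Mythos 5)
Your proof of part $(1)$ is correct and follows essentially the same route as the paper: the orthogonal splitting of $\|\mathcal{G}_{\varphi}f\|^2$ over $\Sigma$ and $\Sigma^{c}$, the identity $\chi_{\Sigma}\mathcal{G}_{\varphi}f=P_{\Sigma}P_{\varphi}(\mathcal{G}_{\varphi}f)$, the operator-norm bound, and the Plancherel formula. The paper's proof stops there and never addresses part $(2)$; your spectral-gap argument --- rearranging the strong-annihilation inequality to $\|P_{\Sigma}F\|^2\leq\bigl(1-C(\Sigma)^{-1}\bigr)\|F\|^2$ for $F$ in the image of $\mathcal{G}_{\varphi}$ and then passing to the operator norm of $P_{\Sigma}P_{\varphi}$ --- correctly fills that gap, provided (as you yourself flag) one knows that $P_{\varphi}$ projects onto the closure of $\{\mathcal{G}_{\varphi}f : f\in L^{2}(\mathbb{R}^2,\mathbb{H})\}$ so the bound extends by continuity.
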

\begin{proof}
For every $f\in L^{2}(\mathbb{R}^2,\mathbb{H})$; we have
  \begin{equation}\label{concentration12}
   \|G_{\varphi}(f)\|^{2}_{L^2(\mathbb{R}^2,\mathbb{H})}=\|\mathcal{G}_{\varphi}\{f\}\chi_{\Sigma}\|^{2}_{L^2(\mathbb{R}^2\times\mathbb{R}^2,\mathbb{H})}+\|\mathcal{G}_{\varphi}\{f\}\chi_{\Sigma^{c}}\|^{2}_{L^2(\mathbb{R}^2\times\mathbb{R}^2,\mathbb{H})}
  \end{equation}

  with $\mathcal{G}_{\varphi}\{f\}\chi_{\Sigma}=P_{\Sigma}P_{\varphi}(\mathcal{G}_{\varphi}\{f\})$\\
  and from the Plancherel formula \ref{Parseval-GQFT}
  \begin{eqnarray*}
  % \nonumber % Remove numbering (before each equation)
    \|\mathcal{G}_{\varphi}\{f\}\chi_{\Sigma}\|^{2}_{L^2(\mathbb{R}^2\times\mathbb{R}^2,\mathbb{H})} &\leq& \|P_{\Sigma}P_{\varphi}\|^2 \|\mathcal{G}_{\varphi}\{f\}\|^2_{L^2(\mathbb{R}^2\times\mathbb{R}^2,\mathbb{H})}\\
     &=& \|P_{\Sigma}P_{\varphi}\|^2 \|\varphi\|^2_{L^{2}(\mathbb{R}^2,\mathbb{H})} \|f\|^2_{L^{2}(\mathbb{R}^2,\mathbb{H}))}\\
     &=& \|\varphi\|^2_{L^{2}(\mathbb{R}^2,\mathbb{H})}\|P_{\Sigma}P_{\varphi}\|^2  \|f\|^2_{L^{2}(\mathbb{R}^2,\mathbb{H})},
  \end{eqnarray*}

 Thus, by the equation \eqref{concentration12}
 $$ \|\mathcal{G}_{\varphi}\{f\}\chi_{\Sigma}\|^{2}_{L^2(\mathbb{R}^2\times\mathbb{R}^2,\mathbb{H})}\geq(1-\|P_{\Sigma}P_{\varphi}\|^2) \|\varphi\|^2_{L^{2}(\mathbb{R}^2,\mathbb{H})} \|f\|^2_{L^{2}(\mathbb{R}^2,\mathbb{H})}$$
 $$\displaystyle\|f\|_{L^2(\mathbb{R}^2,\mathbb{H})} \|\varphi\|_{L^{2}(\mathbb{R}^2,\mathbb{H})}\leq \frac{1}{\sqrt{1-\|P_{\Sigma}P_{\varphi}\|^2}}\|\mathcal{G}_{\varphi}\{f\}\chi_{\Sigma}\|_{L^2 (\mathbb{R}^2\times\mathbb{R}^2,\mathbb{H})}$$

\end{proof}
Now, we give an analogue of Benedicks-type theorem,  which state that, for a subset $\Sigma$ of the form $\Sigma=S\times B_{R}\subset \mathbb{R}^2 \times\mathbb{R}^2$, such that $0<m(S)<\infty$ and $B_{R}$ is the ball of centre 0 and the radius $R$.\\
 Then $\Sigma$ is weakly annihilating.\\
 Now, we give the Benedicks theorem,
 \begin{theorem}[Benedicks-type theorem for $\mathcal{G}_{\varphi}\{f\}$ ]
   Let $r,R>0$. Let $\varphi\in L^2(\mathbb{R}^2,\mathbb{H})\cap L^{\infty}(\mathbb{R}^2,\mathbb{H})$ be nonzero window function such that $supp{\varphi}\subseteq B_{r}$
   and let $\Sigma=S\times B_{R}\subset \mathbb{R}^2 \times\mathbb{R}^2$, be a subset of finite measure $0<m(\Sigma)<\infty$. Then,
   $$ImP_{\varphi}\cap ImP_{\Sigma}=\{0\}$$
   i.e, $\Sigma$ is weakly annihilating.
 \end{theorem}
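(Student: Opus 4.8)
The plan is to reduce the statement to the ordinary two-sided QFT by fixing the translation variable $b$ and exploiting that $\varphi$ has compact support. The starting point is the elementary identity, read off directly from \eqref{eqGQFT} and \eqref{QFT},
$$\mathcal{G}_{\varphi}\{f\}(\omega,b) = \mathcal{F}_{q}\{g_b\}(\omega), \qquad g_b(x) := f(x)\,\overline{\varphi(x-b)},$$
valid for every $b\in\mathbb{R}^2$ (the quaternionic exponentials already sit on the correct sides, so no reordering is needed). Suppose $F\in \operatorname{Im}P_{\varphi}\cap\operatorname{Im}P_{\Sigma}$; then $F=\mathcal{G}_{\varphi}\{f\}$ for some $f\in L^2(\mathbb{R}^2,\mathbb{H})$ and $F$ is supported in $\Sigma=S\times B_{R}$. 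I want to prove $f=0$, which then gives $F=0$ via Theorem \ref{Parseval-GQFT}.

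First I would fix $b$ and analyze $g_b$. Since $\operatorname{supp}\varphi\subseteq B_{r}$, the function $g_b$ is supported in the bounded set $b+B_{r}$; as $f\in L^2$ and $\varphi\in L^{\infty}$, the product $g_b$ lies in $L^2$ with compact support, hence $g_b\in L^1\cap L^2$. Consequently $\mathcal{F}_{q}\{g_b\}(\omega)=\int_{b+B_{r}} e^{-2\pi i x_1\omega_1} g_b(x)\, e^{-2\pi j x_2\omega_2}\,dx$ may be differentiated under the integral sign arbitrarily often in $\omega_1,\omega_2$ and extends to an entire function of each complex frequency; in particular each of its four real components is real-analytic on $\mathbb{R}^2$. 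Next I would feed in the support hypothesis: for $b\notin B_{R}$ one has $\mathcal{G}_{\varphi}\{f\}(\cdot,b)\equiv0$, while for $b\in B_{R}$ one has $\mathcal{G}_{\varphi}\{f\}(\omega,b)=0$ for $\omega\notin S$; in either case $\mathcal{F}_{q}\{g_b\}$ vanishes on $\mathbb{R}^2\setminus S$. Because $0<m(\Sigma)<\infty$ forces $0<m(S)<\infty$, the set $\mathbb{R}^2\setminus S$ has infinite, hence positive, Lebesgue measure. A nonzero real-analytic function on the connected set $\mathbb{R}^2$ has a zero set of measure zero, so each component of the continuous $\mathcal{F}_{q}\{g_b\}$ must be identically zero; thus $\mathcal{F}_{q}\{g_b\}\equiv0$ and, by injectivity of the QFT, $g_b=0$ for a.e.\ $b$.

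Finally I would integrate in $b$ to recover $f$. From $f(x)\,\overline{\varphi(x-b)}=0$ for a.e.\ $(x,b)$ and Tonelli's theorem (the integrand is nonnegative),
$$\|f\|_2^2\,\|\varphi\|_2^2 = \int_{\mathbb{R}^2}|f(x)|^2\Big(\int_{\mathbb{R}^2}|\varphi(x-b)|^2\,db\Big)\,dx = \int_{\mathbb{R}^2}\int_{\mathbb{R}^2}|f(x)|^2|\varphi(x-b)|^2\,db\,dx = 0,$$
where the inner integral was evaluated by translation invariance of Lebesgue measure. Since $\varphi\neq0$ we get $\|f\|_2=0$, so $f=0$, and therefore $\operatorname{Im}P_{\varphi}\cap\operatorname{Im}P_{\Sigma}=\{0\}$; that is, $\Sigma$ is weakly annihilating.

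The main obstacle is the analyticity step: one must justify rigorously that the compactly supported $g_b$ produces a genuinely real-analytic (quaternion-valued) QFT, and that vanishing of the continuous function $\mathcal{F}_{q}\{g_b\}$ on $\mathbb{R}^2\setminus S$ — a set of positive measure — propagates to the whole plane. This is exactly where both hypotheses are consumed: compact support of $\varphi$ delivers the analyticity, while $m(S)<\infty$ guarantees a positive-measure zero set, and the radius $R$ enters only through the requirement $m(\Sigma)<\infty$. An alternative to the analyticity argument is to observe that $g_b$ has support of finite measure and $\mathcal{F}_{q}\{g_b\}$ is supported in $S$ of finite measure, and then to quote the Benedicks theorem for the two-sided QFT directly to conclude $g_b=0$.
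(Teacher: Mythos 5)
Your reduction is exactly the paper's: fix the translation variable $b$, write $\mathcal{G}_{\varphi}\{f\}(\omega,b)=\mathcal{F}_{q}\{g_b\}(\omega)$ with $g_b(x)=f(x)\overline{\varphi(x-b)}$, and observe that $g_b$ is supported in the bounded set $b+B_{r}$ while its QFT vanishes off $S$, a set of finite measure. Where you genuinely diverge is in the key lemma: at this point the paper simply quotes the Benedicks theorem for the two-sided QFT from \cite{Benedicks} to get $g_b\equiv 0$, whereas you prove the special case you need from scratch --- compact support of $g_b\in L^1\cap L^2$ makes each of the four real components of $\mathcal{F}_{q}\{g_b\}$ real-analytic, and a real-analytic function on $\mathbb{R}^2$ vanishing on the positive-measure set $\mathbb{R}^2\setminus S$ (positive because $m(S)<\infty$) must vanish identically. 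Your argument is more elementary and self-contained; it only establishes the ``compact support versus finite-measure spectrum'' version, but that is all the hypothesis $supp\,\varphi\subseteq B_{r}$ ever delivers here, and your closing remark correctly identifies the citation route as the alternative --- which is precisely what the paper does. You also carry the argument one step further than the paper: from $g_b=0$ for a.e.\ $b$ you recover $f=0$ via the Tonelli computation $\|f\|_2^2\|\varphi\|_2^2=\int\int|f(x)|^2|\varphi(x-b)|^2\,db\,dx=0$ together with $\varphi\neq 0$, whereas the paper stops at $F=0$. Since the definition of ``weakly annihilating'' in the paper is phrased in terms of $f$ itself vanishing, this extra step is not cosmetic --- it is needed to match the stated conclusion, so your version is in fact the more complete one.
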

\begin{proof}
  Let $F\in ImP_{\varphi}\cap ImP_{\Sigma}$, then, there exists a function $f\in L^2(\mathbb{R}^2,\mathbb{H})$, such that, $F=\mathcal{G}_{\varphi}\{f\}$ and $supp \{F\}\subset \Sigma$.\\
  Then, $$F(\omega,b)=\mathcal{F}_{q}(f(.)\overline{\varphi(.-b)})(\omega)$$
  thus, $$supp\{f(.)\overline{\varphi(.-b)}\}\subset S.$$
   On other hand $supp\{\varphi\}\subset B_{r}$,\\
  we have $$supp\{f(.)\overline{\varphi(.-b)}\}\subset B_{r+R},$$
  Hence, by the  Benedicks theorem for two sided quaternion Fourier transform  \cite{Benedicks},\\
  we deduce that $$f(.)\overline{\varphi(.-b)}\equiv 0,\quad then \quad F=0.$$
\end{proof}

\end{document}